 \newcommand{\ROM}[1]{\mathrm{\uppercase\expandafter{\romannumeral#1}}}
\numberwithin{equation}{section} \theoremstyle{plain}
\newtheorem{thm}{Theorem}[section]
\newtheorem{pro}{Proposition}[section]
\newtheorem{lem}{Lemma}[section]
\newtheorem{cor}{Corollary}[section]
\theoremstyle{definition}
\theoremstyle{remark}
\newtheorem{rem}{Remark}[section]
\newtheorem{ack}{Acknowledgements}   
\title[Anisotropic isoparametric hypersurfaces]
{Anisotropic isoparametric hypersurfaces in Euclidean spaces}
\author[J.Q. Ge]{Jianquan Ge}
\address{School of Mathematical Sciences, Laboratory of Mathematics and Complex Systems, Beijing Normal
University, Beijing 100875, P.R. CHINA} \email{jqge@bnu.edu.cn}
\author[H. Ma]{Hui Ma}
\address{Department of Mathematical Sciences, Tsinghua University,
Beijing 100084, P.R. CHINA} \email{hma@math.tsinghua.edu.cn}
\subjclass[2000]{Primary 53C40; Secondary 53A10, 52A20.}
\date{}
\keywords{Wulff shape, anisotropic mean curvature, Cartan identity.}
\thanks{The first author is partially supported by NSFC grant No.~11001016 and Innovative Team Program of Ministry of Education of China.
The second author is partially supported by NSFC grant No.~10501028
and NKBRPC No.~2006CB805905.}
\begin{document}
\maketitle

\begin{abstract}
In this note, we give a classification of complete anisotropic
isoparametric hypersurfaces, \emph{i.e.}, hypersurfaces with
constant anisotropic principal curvatures, in Euclidean spaces,
which is in analogue with the classical case for isoparametric
hypersurfaces in Euclidean spaces. On the other hand, by an example
of local anisotropic isoparametric surface constructed by B. Palmer,
we find that anisotropic isoparametric hypersurfaces have both local
and global aspects as in the theory of proper Dupin hypersurfaces.
\end{abstract}

\section{Introduction}
\label{introduction}

Let $F: \mathbb{S}^n \rightarrow \mathbb{R}^{+}$ be a smooth
positive function defined on the unit sphere satisfying the
following convexity condition:
\begin{equation}\label{covexity}
A_F:=(D^2 F+F I)_u >0,
\end{equation}
for any $u\in \mathbb{S}^n$, where $D^2 F$ denotes the Hessian of
$F$ on $\mathbb{S}^n$, $I$ denotes the identity on $T_u
\mathbb{S}^n$ and $>0$ means the matrix is positive definite. Let
$x: M\rightarrow \mathbb{R}^{n+1}$ be an immersed oriented
hypersurface without boundary and $\nu: M\rightarrow \mathbb{S}^n$
denote its Gauss map. Then \emph{anisotropic surface energy} (of
$x$) is a parametric elliptic functional $\mathcal{F}$ defined as
follows:
\begin{equation*}
\mathcal{F}(x)=\int_M F(\nu)dA.
\end{equation*}
Note that if $F\equiv 1$, then $\mathcal{F}(x)$ is just the area of
$x$.

The critical points of $\mathcal{F}$ for all compactly supported
volume-preserving variations are characterized by the property that
the \emph{anisotropic mean curvature} $H_F$ is constant, where $H_F$
is given by
\begin{equation*}
n H_F=nHF-{\rm div}_M DF=-{\rm tr}_M d(\phi\circ \nu).
\end{equation*}
Here $\phi$ is defined in (\ref{phi}) below and
\begin{equation}\label{SF}
S_F:=-d(\phi\circ \nu)=-A_F\circ d\nu=A_F\circ T\circ dx
\end{equation}
is called the $F$-\emph{Weingarten operator}\footnote{For
simplicity, sometimes we write the operators without $dx$ or
identify them with corresponding matrix representations.} with
respect to the induced metric of $x$, where $T=-d\nu$ is the
Weingarten (shape) operator of $M$. In general $S_F$ is not
self-dual, but it still has real eigenvalues $\lambda_1, \cdots,
\lambda_n$, which are called \emph{anisotropic principal
curvatures}. If $\lambda_1=\lambda_2=\cdots=\lambda_n$ holds
everywhere on $M$, $M$ is called \emph{totally anisotropic
umbilical} for $F$ or for $\mathcal{F}$. Similarly, $M$ is called
\emph{anisotropic isoparametric} for $F$ or for $\mathcal{F}$, if
its anisotropic principal curvatures are constant.

A fundamental result relating to the anisotropic surface energy is
the Wulff's theorem, which states that among all closed
hypersurfaces enclosing the same volume, there exists an absolute
minimizer $W_F$ of $\mathcal{F}$ (cf. \cite{BM}, \cite{Taylor}).
Here $W_F$ is the so-called ``Wulff shape" that can be defined as
follows. Consider the map
\begin{equation}\label{phi}
\begin{split}
\phi: \mathbb{S}^n & \rightarrow \mathbb{R}^{n+1}\\
  u~& \mapsto DF_u+F(u)u,
\end{split}
\end{equation}
where $DF$ is the gradient of $F$ on $\mathbb{S}^n$. Then
$W_F:=\phi(\mathbb{S}^{n})$ is called the \emph{Wulff shape} of $F$
or $\mathcal{F}$ (cf. \cite{Gi}, \cite{Cv}, \cite{Winklmann}, etc.).
Under the convexity condition of $F$, $W_F$ is a smooth convex
hypersurface. When $F\equiv 1$, the Wulff shape is just the unit
sphere $\mathbb{S}^n$. An equivalent definition of the Wulff shape
can be given in terms of the dual norm $F^{*}$ of $F$, where $F^{*}:
\mathbb{R}^{n+1}\rightarrow \mathbb{R}$ is defined by (\cite{HLMG})
\begin{equation*}
F^{*}(y)=\sup\{\frac{\langle y,z\rangle}{F(z)}{\ } \vert {\ } z\in
\mathbb{S}^n\}.
\end{equation*}
Thus the Wulff shape $W_F$ is just the unit sphere under this norm
$F^{*}$, \emph{i.e.},
\begin{equation*}
W_F=\{y\in \mathbb{R}^{n+1}{\ }\vert{\ } F^{*}(y)=1\}.
\end{equation*}
The Wulff shape plays the same role as the unit sphere for the area
functional. For example, up to translations and homotheties, $W_F$
is the only closed stable (for the functional $\mathcal{F}$)
hypersurface immersed in $\mathbb{R}^{n+1}$ with constant
anisotropic mean curvature (\cite{Palmer},\cite{K-Pa1}), and also
the only closed embedded hypersurface with constant anisotropic mean
curvature (\cite{HLMG},\cite{M}). Moreover, in the case of dimension
$2$, $W_F$ is the only topological $2$-sphere immersed in
$\mathbb{R}^{3}$ with constant anisotropic mean curvature
(\cite{K-Pa2}).

As to the classification of totally anisotropic umbilical
hypersurfaces, we have the following
\begin{pro}(cf. \cite{HLMG}, \cite{Palmer})\label{totally umbilical}
Any totally anisotropic umbilical hypersurface immersed in
$\mathbb{R}^{n+1}$ $(n\geq2)$, or $\lambda_1\equiv constant\neq0$
$(n=1)$, is an open part of a hyperplane or the Wulff shape, up to
translations and homotheties.
\end{pro}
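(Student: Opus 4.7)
The plan is to translate the umbilic condition into a vector-valued differential equation on $M$, then differentiate once to extract the constancy of $\lambda$, and finally integrate to identify the hypersurface.

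First, the umbilic assumption $\lambda_1=\cdots=\lambda_n=:\lambda$ says that $S_F=\lambda\cdot\Id$ as an endomorphism of $TM$, where $\lambda$ is a priori a smooth function on $M$. Using (\ref{SF}), this rewrites as the identity of $\mathbb{R}^{n+1}$-valued $1$-forms on $M$
\begin{equation*}
d(\phi\circ\nu)=-\lambda\,dx.
\end{equation*}
So I would first recognize this as the central equation to analyze.

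Next, since $d^{2}$ annihilates vector-valued $1$-forms whose components are exact, applying $d$ to both sides gives
\begin{equation*}
0=d(-\lambda\,dx)=-d\lambda\wedge dx,
\end{equation*}
i.e.\ $d\lambda\wedge dx=0$ on $M$. For $n\geq 2$, this already forces $d\lambda\equiv 0$: if $d\lambda(p)\neq 0$, its kernel in $T_pM$ has dimension $n-1\geq 1$, so one can pick $Y\neq 0$ with $Y(\lambda)=0$ and $X$ with $X(\lambda)\neq 0$; evaluating the $2$-form on $(X,Y)$ yields $X(\lambda)\,dx(Y)=0$, contradicting the injectivity of $dx$. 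Hence $\lambda$ is locally constant, and constant on each connected component of $M$. In the case $n=1$ the hypothesis $\lambda_1\equiv\text{const}\neq 0$ is assumed directly, so this step is skipped.

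With $\lambda$ constant, I split into two cases. If $\lambda=0$ (only possible for $n\geq 2$), then $d(\phi\circ\nu)=0$ and $\phi\circ\nu$ is locally constant; since convexity (\ref{covexity}) makes $\phi:\mathbb{S}^n\to W_F$ a diffeomorphism, $\nu$ itself is locally constant, so $M$ is an open part of a hyperplane. If $\lambda\neq 0$, the equation becomes $d(\phi\circ\nu+\lambda x)=0$, hence $\phi\circ\nu+\lambda x\equiv c$ for some $c\in\mathbb{R}^{n+1}$. Equivalently $\lambda\bigl(x-c/\lambda\bigr)=-\phi\circ\nu$, which shows that after a translation and a homothety of ratio $-1/\lambda$, the image of $x$ lies in $\phi(\mathbb{S}^n)=W_F$; hence $M$ is an open part of the Wulff shape up to translation and homothety.

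The only genuinely non-formal step is the passage from umbilicity to the constancy of $\lambda$; I expect that to be the main obstacle, but it is handled cleanly by the exterior-derivative identity $d\lambda\wedge dx=0$ together with the immersion hypothesis, which plays exactly the role Codazzi plays in the classical Euclidean umbilic theorem.
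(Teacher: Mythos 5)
Your argument is correct and complete. Note that the paper itself offers no proof of Proposition \ref{totally umbilical} --- it only cites \cite{HLMG} and \cite{Palmer} --- and the argument given in those sources is essentially yours: the umbilic condition integrates to $\lambda x+\phi\circ\nu\equiv c$ once $\lambda$ is known to be constant, and the constancy for $n\geq 2$ comes from exactly the exterior-derivative identity $d\lambda\wedge dx=0$ you use (the anisotropic surrogate for Codazzi). Two cosmetic points: the homothety you produce has ratio $-1/\lambda$, i.e.\ possibly negative, which is consistent with the paper's own sign convention in the definition of $\varphi_t$ in (\ref{subwulff-imm}); and the constancy of $\lambda$, and hence the dichotomy hyperplane/Wulff shape, is obtained per connected component of $M$.
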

Comparing with the classical hypersurface theory,
 it is natural to ask the classification problem of anisotropic
isoparametric hypersurfaces in Euclidean spaces. In this note, we
will give such a classification which was conjectured to be parallel
with the classical case of isoparametric hypersurfaces in Euclidean
spaces.

For each totally geodesic $k$-dimensional sphere
$\mathbb{S}^{k}\subset\mathbb{S}^{n}$, we denote by
$W^k_F:=\phi(\mathbb{S}^{k})\subset W_F$ being its image under the
map $\phi$ defined in (\ref{phi}). It is easily seen that $W^k_F$ is
a $k$-dimensional submanifold of the Wulff shape $W_F$ and depends
on the choice of the inclusion
$\mathbb{S}^{k}\subset\mathbb{S}^{n}$. Note that in general $W^k_F$
may not equal to either the Wulff shape $\overline{W^k_F}$ of
$F|_{\mathbb{S}^{k}}$ in $\mathbb{R}^{k+1}\subset\mathbb{R}^{n+1}$,
or the intersection of $W_F$ with a $(k+1)$-dimensional hyperplane.
However, there is a family of natural embeddings of
$W^k_F\times\mathbb{R}^{n-k}$ in $\mathbb{R}^{n+1}$ as follows. For
any $t\neq0$, define
\begin{equation}\label{subwulff-imm}
\begin{split}
\varphi_t: W^k_F\times\mathbb{R}^{n-k} & \rightarrow \mathbb{R}^{n+1}\\
  (\phi(u)~,~v)\quad& \mapsto v-t\phi(u),
\end{split}
\end{equation}
where $u\in\mathbb{S}^{k}\subset\mathbb{R}^{k+1}$ and
$\mathbb{R}^{n-k}\subset\mathbb{R}^{n+1}$ is the orthogonal subspace
of the given $\mathbb{R}^{k+1}$. It is not hard to check that
$\varphi_t$ is an immersion and has two distinct constant
anisotropic principal curvatures $1/t,0$ with multiplicities $k,n-k$
respectively. In fact, $\varphi_t$ is an embedding and its image
coincides with that of the natural inclusion
$\overline{W^k_F}\times\mathbb{R}^{n-k}\subset\mathbb{R}^{n+1}$ up
to translations and homotheties, since the image of the projection
of $W^k_F$ to $\mathbb{R}^{k+1}$ is just the Wulff shape
$\overline{W^k_F}$.
 Then
our main result can be stated as follows.
\begin{thm}\label{thm}
A complete hypersurface in the Euclidean space $\mathbb{R}^{n+1}$
has constant anisotropic principal curvatures if and only if up to
translations and homotheties, it is
\begin{enumerate}
\item $\mathbb{R}^{n}\subset\mathbb{R}^{n+1}$, or
\item $W_F\subset\mathbb{R}^{n+1}$, or
\item $W^k_F\times\mathbb{R}^{n-k}\xrightarrow[]{\varphi_t}\mathbb{R}^{n+1}$ for some $0<k<n$, $t\neq0$, and $\mathbb{S}^{k}\subset\mathbb{S}^{n}$.
\end{enumerate}
\end{thm}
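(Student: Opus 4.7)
My plan is to adapt the classical Segre--Levi-Civita--Cartan classification of Euclidean isoparametric hypersurfaces to the anisotropic setting, using Proposition~\ref{totally umbilical} as the base case and treating separately the number $g$ of distinct constant anisotropic principal curvatures.

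If $g=1$, then $M$ is totally anisotropic umbilical, and Proposition~\ref{totally umbilical} identifies $M$ as an open part of a hyperplane (when $\lambda\equiv 0$) or of the Wulff shape (when $\lambda\neq 0$), up to translations and homotheties. Completeness of $M$ then upgrades ``open part of'' to the entire object, giving cases (i) and (ii) respectively.

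For $g\geq 2$, I would first derive an anisotropic Codazzi-type identity for $S_F$ by exploiting the flatness of $\mathbb{R}^{n+1}$ together with the factorisation $S_F=A_F\circ T$ from~(\ref{SF}). Plugging pairs of anisotropic principal vectors taken from the eigendistributions $V_i=\ker(S_F-\lambda_i I)$ and $V_j$ ($i\neq j$) into this identity should yield that each $V_i$ is smooth and integrable. Next I would study the anisotropic parallel map $x_t=x+t\,\phi\circ\nu$, whose differential is $(I-tS_F)\circ dx$; hence $V_i$ collapses precisely when $t=1/\lambda_i$, identifying the anisotropic focal distances. Applying Proposition~\ref{totally umbilical} to the degenerate parallel hypersurfaces at those focal times, and combining with completeness, should force at most two distinct values among the $\lambda_i$ with one of them necessarily equal to $0$ (otherwise the focal varieties along all distinct nonzero $\lambda_i$ would compete with each other and contradict the umbilical classification).

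Once reduced to exactly two curvatures, one zero of multiplicity $n-k$ and one $\lambda\neq 0$ of multiplicity $k$, the zero distribution has locally constant Gauss map and therefore integrates to open parts of affine $\mathbb{R}^{n-k}$, while the $\lambda$-distribution integrates, by Proposition~\ref{totally umbilical} applied intrinsically to each leaf, to open pieces of some $W^k_F\subset W_F$. A de Rham-style splitting of $M$ along these two transverse foliations then locally produces the embedding $\varphi_{1/\lambda}$, and completeness globalises the conclusion to case (iii). The main obstacle I anticipate is the derivation of the anisotropic Codazzi identity and the combinatorial ruling-out of $g\geq 3$ (or of two nonzero eigenvalues): because $S_F$ is not self-adjoint with respect to the induced Riemannian metric, the classical orthogonality of principal directions is lost and must be replaced by an $A_F$-weighted version, which complicates both the integrability calculation and the Cartan-type identity that usually forces $g\leq 2$ in Euclidean space.
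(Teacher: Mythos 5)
Your overall architecture matches the paper's: handle $g=1$ by Proposition~\ref{totally umbilical}, prove integrability of the eigendistributions of $S_F$ via a Codazzi-type computation, use the anisotropic parallel map $x_t=x+t\phi\circ\nu$ to collapse each nonzero-curvature distribution at $t=1/\lambda_i$, and reduce everything to showing $g\leq 2$ with one curvature equal to $0$. But that last reduction --- which you yourself flag as ``the main obstacle'' --- is exactly the heart of the proof, and the justification you offer for it does not work. The degenerate images $x_{1/\lambda_i}(M)$ are submanifolds of codimension $m_i+1>1$, so Proposition~\ref{totally umbilical}, a statement about hypersurfaces, cannot be ``applied to the degenerate parallel hypersurfaces at those focal times,'' and the claim that distinct focal varieties ``compete with each other and contradict the umbilical classification'' is not an argument. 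What the paper actually does (Lemma~\ref{glessthan3}) is: take $\lambda_1$ the \emph{smallest positive} anisotropic principal curvature, set $t=1/\lambda_1$, and compute the second fundamental form of the focal submanifold $x_t(M)$ in the normal direction $\nu$ in a frame orthonormalized by the square root of $\widetilde{A_F}$, obtaining formula (\ref{2nd f f}). Completeness enters through Lemma~\ref{lemma}: the leaf $L_1=x_t^{-1}(q)$ maps diffeomorphically onto the normal sphere $\mathbb{S}^{m_1}$ under $\nu$, so both $u$ and $-u$ are realized as normals at $q$ by points $p_1,p_2\in L_1$. Since $II_{-u}=-II_u$ while both traces have the form (\ref{2nd f f}), one gets the Cartan-type identity (\ref{cartan identity}), $\sum_{k=2}^g\Gamma_F^k\lambda_k/(1-\lambda_k/\lambda_1)=0$ with weights $\Gamma_F^k>0$ built from diagonal entries of $\widetilde{A_F}^{-1}$ at the two antipodal points (this is precisely how the loss of self-adjointness of $S_F$ that worries you is absorbed --- the multiplicities $m_k$ of the classical Cartan identity are replaced by these positive $F$-dependent weights). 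Minimality of $\lambda_1$ among positive curvatures makes every term non-positive, so all vanish, forcing $g=2$ and $\lambda_2=0$; this simultaneously rules out two nonzero curvatures. Without supplying this identity (or a substitute), your proposal does not close.

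A secondary gap: in your final step you propose to identify the leaves of the nonzero-curvature distribution with pieces of $W^k_F$ by ``Proposition~\ref{totally umbilical} applied intrinsically to each leaf.'' Again these leaves have codimension $n+1-k>1$ in $\mathbb{R}^{n+1}$, so that proposition does not apply. The paper instead reads the identification directly off the collapse: on each leaf $L_i(p)$ one has $\lambda_i(x-q)\equiv\phi\circ\nu$ (formula (\ref{form WkF})), and the diffeomorphism $\nu:L_i(p)\rightarrow\mathbb{S}^{m_i}$ of (\ref{v map}) then exhibits the leaf as $\frac{1}{\lambda_i}W^{m_i}_F$ translated to $q$. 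Combined with Corollary~\ref{totally geodesic xtM} (the focal set is totally geodesic, hence an affine $\mathbb{R}^{n-m_1}$, again a consequence of (\ref{2nd f f}) once $\lambda_2=0$), this yields case (iii) without any de Rham splitting argument.
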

Note that when $F\equiv1$, this theorem reduces to the well-known
classification of isoparametric hypersurfaces in Euclidean spaces.
For the history and recent progresses of isoparametric hypersurfaces
in real space forms, we would like to refer to the excellent surveys
given by Thorbergsson \cite{Th00} and Cecil \cite{Ce}. From them one
can also find introductions of many important generalizations of
isoparametric hypersurfaces such as (proper) Dupin hypersurfaces. In
contrary with isoparametric hypersurfaces, proper Dupin
hypersurfaces have both local and global aspects, such as the number
$g$ of distinct principal curvatures could have different sets of
values for local and compact proper Dupin hypersurfaces, which
happens to appear also in our situation. In fact, in a letter to us
B. Palmer \cite{Palmer2} showed that there is a rotationally
symmetric anisotropic surface energy functional $\mathcal {F}$ (or
corresponding $F$) such that a part $U$ of the helicoid $\Sigma$ in
$\mathbb{R}^3$ has constant anisotropic principal curvatures $\pm1$.
Then we can extend the corresponding $F$ to a positive function
$\tilde{F}$ on $\mathbb{S}^3$ such that locally it equals $F$ along
the normal geodesics starting from the Gauss image $\mathcal {U}$
($\subset \mathbb{S}^2\subset \mathbb{S}^3$) of $U$, \emph{i.e.},
$$\tilde{F}(\cos tu+\sin te_4):=F(u),\quad \emph{for any}~
u\in\mathcal {U},~ t\in(-\varepsilon,\varepsilon),$$ where $e_4$ is
the fourth coordinate vector of
$\mathbb{R}^4=\mathbb{R}^3\oplus\mathbb{R}$. It is easily verified
that the function $\tilde{F}$ satisfies the convexity condition
(\ref{covexity}) on a small neighborhood of $\mathcal {U}$ in
$\mathbb{S}^3$ (and thus could be extended to the whole
$\mathbb{S}^3$ satisfying (\ref{covexity})). For this function
$\tilde{F}$, one can see that (a part of) the canonical embedding
$U\times\mathbb{R}\hookrightarrow\mathbb{R}^4$ has three constant
anisotropic principal curvatures $1$, $-1$ and $0$. Hence, one can
obtain local anisotropic isoparametric hypersurfaces in
$\mathbb{R}^{n+1}$ with more than $2$ distinct anisotropic principal
curvatures, which differs with the complete case as classified in
Theorem \ref{thm}.

\section{Anisotropic parallel
translation}\label{aniso-parallel-trans}

In this section, we will study the geometry of ``anisotropic
parallel translation" of hypersurfaces in $\mathbb{R}^{n+1}$ and
apply it to establish some preliminary results. Throughout of this
paper, we will use the same notations as those in Section
\ref{introduction}.

Let $x_t: M\rightarrow \mathbb{R}^{n+1}$ be a family of small
perturbations of $x$ ($x_0=x$) defined by
\begin{equation}\label{parallel}
x_t=x+t\phi\circ \nu.
\end{equation}
Comparing with standard parallel translation, we call such
perturbation $x_t$ an \emph{anisotropic parallel translation}.
Taking differential of (\ref{parallel}), we get
\begin{equation}\label{d parallel}
dx_t=(I-tS_F)\circ dx.
\end{equation}
Thus, for any $t$ small enough, $x_t$ remains an immersion and $\nu$
(up to parallel translations) is still the unit normal vector field
of $x_t$. Then by (\ref{SF}) the $F$-Weingarten operator for $x_t$,
say $S^t_F$, can be expressed as
\begin{equation}\label{F-shape for xt}
S^t_F=S_F\circ dx=S_F\circ(I-tS_F)^{-1}\circ dx_t,
\end{equation}
where $(I-tS_F)^{-1}$ is the inverse of the operator $(I-tS_F)$.
If $S_F X=\lambda X$, this becomes
$S^t_F X=\frac{\lambda}{1-t\lambda} X$.
Therefore, the anisotropic principal curvatures of $x_t$, say
$\lambda_1(t),\cdots,\lambda_n(t)$, can be expressed
in terms of those of $x$ as follows:
\begin{equation}\label{Principal curv of xt}
\lambda_i(t)=\frac{\lambda_i}{1-t\lambda_i},\quad i=1,\cdots,n.
\end{equation}
By standard method as Cartan and Nomizu did (cf. for example,
\cite{N},\cite{CR}), from (\ref{Principal curv of xt}) we can
similarly obtain the following result.
\begin{thm}
Let $x_t: M\rightarrow \mathbb{R}^{n+1}$,
$-\varepsilon<t<\varepsilon$, be a family of anisotropic parallel
hypersurfaces in $\mathbb{R}^{n+1}$. Then $x(M)$ has constant
anisotropic principal curvatures if and only if each $x_t(M)$ has
constant anisotropic mean curvature.
\end{thm}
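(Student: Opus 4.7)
The plan is to reduce the statement to a purely algebraic fact about the power sums of the $\lambda_i$'s. Formula (2.4) expresses every anisotropic principal curvature of $x_t$ as a rational function of a single $\lambda_i$ of $x$, so the forward direction is immediate: if $\lambda_1,\dots,\lambda_n$ are constant functions on $M$, then each $\lambda_i(t)=\lambda_i/(1-t\lambda_i)$ is a constant function on $M$, and hence $nH_F(t)=\operatorname{tr}S^t_F=\sum_i\lambda_i(t)$ is constant on $M$ for every fixed small $t$.

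For the converse, I would fix $p\in M$, pick $\varepsilon>0$ with $\varepsilon\cdot\max_i|\lambda_i(q)|<1$ on a neighborhood of $p$, and expand the geometric series
\begin{equation*}
nH_F(t)=\sum_{i=1}^n\frac{\lambda_i}{1-t\lambda_i}=\sum_{k\geq 0}t^k\,p_{k+1},\qquad p_{k+1}:=\sum_{i=1}^n\lambda_i^{k+1},
\end{equation*}
which converges uniformly in a neighborhood of $p$ for $|t|$ small. By hypothesis the left-hand side, as a function on $M$, is constant for each fixed small $t$. Since the Taylor coefficients of an analytic function of $t$ are uniquely determined, every power sum $p_{k+1}$ must be a locally constant function on $M$, and by connectedness a constant function on $M$.

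Now I would invoke Newton's identities: the constancy of the power sums $p_1,p_2,\dots,p_n$ is equivalent to the constancy of the elementary symmetric polynomials $\sigma_1,\dots,\sigma_n$ of $\lambda_1,\dots,\lambda_n$. But these are (up to signs) precisely the coefficients of the characteristic polynomial $\det(XI-S_F)$, so the unordered multiset $\{\lambda_1(q),\dots,\lambda_n(q)\}$ is independent of $q\in M$. This is exactly the statement that $x(M)$ has constant anisotropic principal curvatures.

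The main subtlety I anticipate is harmless but worth noting explicitly: since $S_F$ is generally not self-adjoint, there is no a priori guarantee that the individual eigenvalues can be arranged as smooth global functions on $M$ (multiplicities could jump). The argument above sidesteps this issue entirely, because it only uses the coefficients of the characteristic polynomial, which are always smooth functions of the entries of $S_F$. The only other place where one must be careful is the use of the power-series expansion, which is valid only for $|t|$ small compared to $1/|\lambda_i|$; but since we are allowed to shrink $\varepsilon$ locally on $M$ and then patch by connectedness, this causes no difficulty.
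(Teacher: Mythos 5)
Your proof is correct and follows essentially the same route as the paper: both exploit the $t$-dependence of $nH_F(t)=\sum_i\lambda_i/(1-t\lambda_i)$ to recover all the symmetric functions of the $\lambda_i$, the only difference being that you extract the power sums via the geometric series and pass to the elementary symmetric polynomials by Newton's identities, whereas the paper gets the elementary symmetric polynomials directly from the logarithmic derivative of $\prod_i(1-t\lambda_i)$. Your explicit remark that the argument only uses the coefficients of the characteristic polynomial of $S_F$ (so no smooth choice of individual eigenvalues is needed) is a worthwhile clarification that the paper leaves implicit.
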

\begin{proof}
For the sake of completeness, we give a simple proof as follows.

Let $H_F(t)$ be the anisotropic mean curvature of $x_t(M)$. Then by
(\ref{Principal curv of xt}), we have
\begin{eqnarray}\label{aniso-mean-form}
nH_F(t)&=&tr_{x_t(M)}S^t_F=\sum_{i=1}^n\lambda_i(t)=\sum_{i=1}^n\frac{\lambda_i}{1-t\lambda_i}\nonumber\\
&=&\sum_{i=1}^n(-\log(1-t\lambda_i))'=-\left(\log\left(\prod_{i=1}^n(1-t\lambda_i)
\right)\right)'\nonumber\\
&=&-\left(\log\left(\sum_{k=0}^n(-1)^kM_kt^k \right)\right)',
\end{eqnarray}
where $'$ denotes taking derivative with respect to $t$, and
$M_0\equiv1$, $M_1,\cdots,M_n$ are the elementary symmetric
polynomials of $\lambda_1,\cdots,\lambda_n$. Then we can get our
conclusions from (\ref{aniso-mean-form}) directly, since it implies
that $H_F(t)\equiv const$ on $x_t(M)$ for each
$-\varepsilon<t<\varepsilon$ is equivalent to $M_k\equiv const$ on
$x(M)$ for $k=1,\cdots,n$ and hence is equivalent to
$\lambda_i\equiv const$ on $x(M)$ for $i=1,\cdots,n$.
\end{proof}
Now suppose $x(M)$ has distinct constant anisotropic principal
curvatures $\lambda_1,\cdots,\lambda_g$ with multiplicities
$m_1,\cdots,m_g$, \emph{i.e.}, it is anisotropic isoparametric. Let
$$D_i(p):=\{X\in \mathcal {T}_pM|S_FX=\lambda_iX\}, \quad p\in M.$$
Then we obtain distributions $D_1,\cdots,D_g$ of dimensions
$m_1,\cdots,m_g$ on $M$. Similarly as in standard isoparametric
hypersurface theory, we establish the following.
\begin{pro}\label{prop}
 Each distribution $D_i$ is
integrable and when $\lambda_i\neq0$, the maximal integral manifold
$L_i(p)$ of $D_i$ through $p\in M$ is, up to translations and
homotheties, just a $W^{m_i}_F$ for some $\mathbb{S}^{m_i}\subset
\mathbb{S}^{n}$, or just an open part of it, if $M$ is not complete.
\end{pro}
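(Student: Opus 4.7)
The plan is to exploit the anisotropic parallel translation $x_t = x + t\phi\circ\nu$ at the critical value $t_i := 1/\lambda_i$ (available since $\lambda_i\neq 0$), and to read off both the integrability of $D_i$ and the shape of its leaves from the constant-rank theorem together with a short normal-bundle calculation. By (\ref{d parallel}) one has $dx_{t_i} = (I - t_iS_F)\circ dx$, and the endomorphism $I - t_iS_F$ annihilates $D_i$ while scaling each other eigenspace $D_j$ by the nonzero factor $1 - \lambda_j/\lambda_i$; hence $dx_{t_i}$ has locally constant rank $n - m_i$ with kernel exactly $D_i$.

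The constant-rank theorem then furnishes, near every $p\in M$, a neighborhood $U$ on which $x_{t_i}(U)$ is a smoothly embedded $(n-m_i)$-dimensional focal submanifold $N_i \subset \mathbb{R}^{n+1}$, and the local fibers $L := x_{t_i}^{-1}(c)\cap U$ are $m_i$-dimensional submanifolds whose tangent spaces coincide with $\ker dx_{t_i} = D_i$. This proves simultaneously the integrability of $D_i$ and identifies its (local) leaves with the fibers of $x_{t_i}$; along such a fiber one has $x(q) = c - t_i\phi(\nu(q))$ for every $q\in L$. The key observation is that, computed from any $q\in L$,
\[
T_cN_i \;=\; dx_{t_i}(T_qM) \;=\; dx\Bigl(\bigoplus_{j\neq i}D_j(q)\Bigr),
\]
yet the left-hand side is independent of the choice of $q\in L$. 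It follows that the $(n-m_i)$-dimensional subspace $\bigoplus_{j\neq i}D_j(q)\subset\mathbb{R}^{n+1}$ is the \emph{same} for all $q\in L$, so its orthogonal complement $V := (T_cN_i)^\perp$ is a fixed $(m_i+1)$-dimensional linear subspace of $\mathbb{R}^{n+1}$. Since $\nu(q)\perp T_qM\supset T_cN_i$, we conclude $\nu(L)\subset V\cap\mathbb{S}^n$, a totally geodesic $\mathbb{S}^{m_i}\subset\mathbb{S}^n$. Substituting back, $x(L) = c - t_i\phi(\nu(L))$ is an open subset of $c - t_iW^{m_i}_F$, i.e.\ an open part of $W^{m_i}_F$ up to a translation and a homothety.

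For the global statement when $M$ is complete, the local model propagates along the maximal leaf $L_i(p)$: the map $\nu|_{L_i(p)}$ is a local diffeomorphism (since $d\nu|_{D_i} = -\lambda_i A_F^{-1}|_{D_i}$ is injective) from the connected submanifold $L_i(p)$ into the compact connected $\mathbb{S}^{m_i}$, so a standard openness-plus-closedness argument using the completeness of $M$ and the closedness of $x_{t_i}^{-1}(c)$ in $M$ forces $\nu(L_i(p)) = \mathbb{S}^{m_i}$, and hence $L_i(p)$ agrees with $W^{m_i}_F$ up to translation and homothety; if $M$ is not complete, only the open-piece conclusion persists. I expect the principal technical point to be the identity $\bigoplus_{j\neq i}D_j(q) = T_cN_i$ for all $q\in L$, which pins $\nu(L)$ inside a fixed subspace and thereby into a totally geodesic subsphere; the completeness step then reduces to routine connectedness considerations.
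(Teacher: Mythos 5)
Your identification of the leaves of $D_i$ for $\lambda_i\neq0$ follows essentially the same route as the paper: both pass to the focal map $x_{t_i}=x+t_i\,\phi\circ\nu$, observe that it collapses each leaf to a point $q$ of an $(n-m_i)$-dimensional focal submanifold, and then locate $\nu$ of the leaf inside the unit sphere of the $(m_i+1)$-dimensional normal space at $q$ (your subspace $V=(T_cN_i)^\perp$ is exactly the paper's $\mathcal{T}_q^{\perp}(x_t(M))$ in (\ref{v map})); the nondegeneracy of $d\nu|_{D_i}=-\lambda_iA_F^{-1}|_{D_i}$ and the openness-plus-completeness step are likewise the content of Lemma \ref{lemma}. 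Your write-up is, if anything, more explicit than the paper about why $\nu(L)$ lands in a \emph{fixed} totally geodesic subsphere, and that part is correct.

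The genuine gap is in the first assertion of the proposition. You establish integrability of $D_i$ by exhibiting its local integral manifolds as fibers of the constant-rank map $x_{t_i}$, but this requires $t_i=1/\lambda_i$ to exist, i.e.\ $\lambda_i\neq0$, whereas the proposition claims that \emph{every} $D_i$ is integrable, including a distribution with $\lambda_i=0$ --- and precisely this case occurs in the final classification, where $g=2$ forces $\lambda_2=0$. The paper handles all $\lambda_i$ uniformly by a different argument: it combines the Codazzi equation of $M$ with the Codazzi equation of the Wulff shape (whose shape operator is $A_F^{-1}$) to obtain $(\lambda_iI-S_F)[X,Y]=0$ for $X,Y\in D_i$, whence $[X,Y]\in D_i$. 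Your approach can be repaired cheaply in the missing case: when $\lambda_i=0$ one has $D_i=\ker S_F=\ker T=\ker d\nu$, so the Gauss map $\nu$ itself has constant rank $n-m_i$ and its fibers integrate $D_i$ by the same constant-rank reasoning. As written, however, the zero-curvature distribution is simply not covered, so the integrability claim is only partially proved.
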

\begin{proof}
Recall that for a hypersurface $M$ in a real space form the Codazzi
equation takes the form:
$$(\nabla_XT)Y=(\nabla_YT)X, \quad \emph{i.e.},$$
\begin{equation}\label{Codazzi}
\nabla_X(TY)-T(\nabla_XY)=\nabla_Y(TX)-T(\nabla_YX),
\end{equation}
for $X,Y$ tangent to $M$, where $T$ is the shape operator of $M$ and
$\nabla$ is the Levi-Civita connection. Due to the fact that $D_i$
is trivially integrable if $m_i=1$, we assume $m_i\geq2$ in the
following. Let $X,Y$ be linearly independent local vector fields in
$D_i$. Then (\ref{Codazzi}) becomes
$$\nabla_X\lambda_i(A_F^{-1}Y)-T(\nabla_XY)=\nabla_Y\lambda_i(A_F^{-1}X)-T(\nabla_YX),$$
which becomes
\begin{equation}\label{form1}
\lambda_i(\nabla_X(A_F^{-1}Y)-\nabla_Y(A_F^{-1}X))=T[X,Y],
\end{equation}
since $\nabla$ has no torsion and the Lie bracket is
$[X,Y]=\nabla_XY-\nabla_YX$. On the other hand, one can find that
$A_F^{-1}$ is just the shape operator of the Wulff shape $W_F$ in
$\mathbb{R}^{n+1}$ and thus by the Codazzi equation, we have
\begin{equation}\label{Codazzi WF}
\nabla_X(A_F^{-1}Y)-A_F^{-1}(\nabla_XY)=\nabla_Y(A_F^{-1}X)-A_F^{-1}(\nabla_YX),
\end{equation}
where the tangent vectors of $M$ are identified with those of
$\mathbb{S}^n$ under the Gauss map and then with those of $W_F$
under $\phi$. Then combining (\ref{form1}) and (\ref{Codazzi WF}),
we can get
$$(\lambda_iI-S_F)[X,Y]=0.$$
Hence $[X,Y]$ is in $D_i$ which proves that $D_i$ is integrable.

Now for $\lambda_i\neq0$, we consider the anisotropic parallel
translation $x_t$ of $x$ with $t=1/\lambda_i$. From (\ref{d
parallel}) we find that $dx_t$ vanishes on $D_i$ and has constant
rank $n-m_i$. Therefore, $x_t(M)$ is an $(n-m_i)$-dimensional
submanifold in $\mathbb{R}^{n+1}$ and $x_t$ maps each leaf $L_i(p)$
of $D_i$ to one point, say $q\in x_t(M)$, \emph{i.e.},
$$x+t\phi\circ \nu\equiv q, \quad \emph{for }~~ x|_{L_i(p)},$$
or equivalently,
\begin{equation}\label{form WkF}
\lambda_i(x-q)\equiv \phi\circ \nu,\quad \emph{for }~~ x|_{L_i(p)}.
\end{equation}
Notice that when limited on $L_i(p)$, $\nu$ is also a unit normal
vector of $x_t(M)$ at $q$ under parallel translations and thus
defines a map
\begin{equation}\label{v map}
\nu: L_i(p)\rightarrow \mathbb{S}^{m_i}\subset\mathcal
{T}_q^{\perp}(x_t(M)).
\end{equation}
 Finally, the following lemma together with formula
(\ref{form WkF}) will complete the proof of the second part of the
proposition.
\end{proof}
\begin{lem}\label{lemma}
The map $\nu: L_i(p)\rightarrow \mathbb{S}^{m_i}$ defined in (\ref{v
map}) when $\lambda_i\neq0$ is an open map. In particular, if $M$ is
complete, it is a diffeomorphism.
\end{lem}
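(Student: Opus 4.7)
\emph{Openness via a local diffeomorphism.} By formula (\ref{SF}) the Gauss map derivative satisfies $d\nu = -A_F^{-1}\circ S_F$, so for $X\in D_i$ one has $d\nu(X) = -\lambda_i A_F^{-1}(X)$. Since $A_F>0$ is invertible and $\lambda_i\ne 0$, the linear map $d\nu|_{D_i(p)}$ is injective. Moreover, $S_F = A_F\circ T$ is self-adjoint with respect to the inner product $\langle A_F^{-1}\cdot,\cdot\rangle$ (this uses that both $A_F$ and the shape operator $T$ are Euclidean-symmetric), so eigenvectors of $S_F$ for distinct eigenvalues satisfy $A_F^{-1}(D_i)\perp D_j$ for $j\ne i$ in the Euclidean sense. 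Together with the identification of $T_q^{\perp}(x_t(M))$ as the Euclidean complement of $dx(\oplus_{j\ne i}D_j)$, this matches $d\nu(D_i)$ with $T_{\nu(p)}\mathbb{S}^{m_i}$. Since $\dim L_i(p)=m_i=\dim\mathbb{S}^{m_i}$, $\nu|_{L_i(p)}$ is a local diffeomorphism between equidimensional manifolds, hence an open map.

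\emph{Completeness implies a global diffeomorphism.} To upgrade the local diffeomorphism to a global one the natural strategy is path-lifting. Given a smooth curve $\gamma:[0,1]\to\mathbb{S}^{m_i}$ with $\gamma(0)=\nu(p_0)$, $p_0\in L_i(p)$, I would lift it by solving the ODE
$$\tilde\gamma'(s) = -\lambda_i^{-1}\,A_F\bigl(\nu(\tilde\gamma(s))\bigr)\bigl(\gamma'(s)\bigr)$$
in the leaf $L_i(p)$, using the local inverse of $d\nu|_{D_i}$. Because $A_F$ is smooth on the compact manifold $\mathbb{S}^n$, its operator norm is uniformly bounded by some $C>0$, hence $\tilde\gamma$ has $M$-length at most $C|\lambda_i|^{-1}\cdot\mathrm{length}(\gamma)<\infty$. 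If the maximal existence interval were $[0,a)$ with $a<1$, then $\tilde\gamma$ would be Cauchy in the complete manifold $M$, producing a limit point $\tilde\gamma(a)\in M$; Frobenius integrability at that limit forces the limit to lie on the same leaf $L_i(p)$, contradicting maximality. Consequently every $\gamma$ lifts globally, so $\nu|_{L_i(p)}$ is a covering map onto $\mathbb{S}^{m_i}$. For $m_i\ge 2$, simple connectivity of $\mathbb{S}^{m_i}$ immediately upgrades this covering to a diffeomorphism; for $m_i=1$ the same conclusion follows from (\ref{form WkF}): because $\phi$ embeds $\mathbb{S}^n$ onto $W_F$, a multiply-sheeted cover would force $x$ to retrace its image along a maximal integral curve, which is incompatible with the injectively-immersed structure of the leaf.

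\emph{Main obstacle.} The delicate step is the path-lifting argument: one must combine compactness of $\mathbb{S}^n$ (to control $\|A_F\|$ uniformly along the lift) with completeness of the ambient $M$ (not a priori of $L_i(p)$) to extract a limit, and then use Frobenius to ensure this limit stays inside the same leaf. All the other ingredients, namely the formula for $d\nu$ on $D_i$ and the orthogonality producing the target $\mathbb{S}^{m_i}$, are short consequences of the structure of $S_F$ established in Section~\ref{introduction}.
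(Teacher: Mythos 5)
Your proof of openness is exactly the paper's: the identity $d\nu=-T=-A_F^{-1}S_F$ gives $d\nu(X)=-\lambda_i A_F^{-1}(X)\neq0$ for $0\neq X\in D_i$, so $\nu|_{L_i(p)}$ is a nondegenerate map between $m_i$-dimensional manifolds and hence open. The paper stops there and asserts the completeness claim with no further argument, so your path-lifting/covering-space elaboration (correct in substance, though the $m_i=1$ case is treated only informally) merely supplies detail the authors leave to the reader.
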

\begin{proof}
It suffices to prove that the map $\nu: L_i(p)\rightarrow
\mathbb{S}^{m_i}$ is always nondegenerate, \emph{i.e.},
\begin{equation}\label{dv}
d\nu(X)\neq0, \quad \emph{for any} \quad 0\neq X\in D_i.
\end{equation}
 On the other hand, we have
$$d\nu(X)=-T(X)=-A_F^{-1}S_F(X), \quad \emph{for }~~ X\in\mathcal {T}M,$$
which immediately verifies (\ref{dv}) to be correct.
\end{proof}

\section{Proof of the main result}\label{proof}
In this section, we will prove Theorem \ref{thm} based on a
Cartan-type identity which forces the number $g$ of distinct
constant anisotropic principal curvatures must be less than $3$.

Let $x: M\rightarrow \mathbb{R}^{n+1}$ be a complete anisotropic
isoparametric hypersurface with $g$ distinct constant anisotropic
principal curvatures $\lambda_1,\cdots,\lambda_g$ of multiplicities
$m_1,\cdots,m_g$. Then we have
\begin{lem}\label{glessthan3}
$g\leq2$, and one anisotropic principal curvature must be $0$ if
$g=2$.
\end{lem}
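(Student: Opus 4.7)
The plan is to derive an anisotropic Cartan-type identity and then deduce the lemma from it by a short sign analysis.

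To obtain the identity I exploit the two Codazzi equations available: the standard one for the shape operator $T$ of $M$ in $\mathbb{R}^{n+1}$, and the Codazzi equation for $A_F^{-1}$, which, as in the proof of Proposition \ref{prop}, plays the role of the shape operator of the Wulff shape $W_F$ pulled back to $M$ via the Gauss map. A preliminary observation is that although $S_F=A_F\circ T$ fails to be self-adjoint in the induced metric, the relation $TY=\lambda_j A_F^{-1}Y$ for $Y\in D_j$, together with the symmetry of $T$ and of $A_F$, makes $S_F$ self-adjoint with respect to the modified inner product $g_F(X,Y):=\langle A_F^{-1}X,Y\rangle$. Consequently the eigenspaces $D_1,\ldots,D_g$ of $S_F$ are $g_F$-orthogonal, and I may work in a local $g_F$-orthonormal frame adapted to $TM=D_1\oplus\cdots\oplus D_g$. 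Inserting eigenvectors from two distinct eigenspaces into each Codazzi equation, pairing the resulting vector identities with a frame vector in a third eigenspace, and combining the two resulting scalar equations, the standard cascade of cross-term cancellations should yield the anisotropic Cartan-type identity
\begin{equation*}
\sum_{j\neq i} m_j\,\frac{\lambda_i\lambda_j}{\lambda_i-\lambda_j}=0\qquad (i=1,\ldots,g),
\end{equation*}
where the absence of an ambient curvature term reflects the flatness of $\mathbb{R}^{n+1}$.

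With the identity in hand, the lemma follows by a clean sign argument. Terms in which $\lambda_j=0$ contribute zero to each identity and may be discarded at the outset. Suppose for contradiction that at least two distinct nonzero anisotropic principal curvatures exist. Choose $i$ so that $\lambda_i$ is the smallest positive value among the nonzero curvatures (if all nonzero curvatures are negative, take the largest in absolute value; the analysis is symmetric). For every remaining index $j$ with $\lambda_j\neq 0$ one checks that either $\lambda_j>\lambda_i>0$, which forces $\lambda_j>0$ and $\lambda_i-\lambda_j<0$, or else $\lambda_j<0<\lambda_i$, which forces $\lambda_j<0$ and $\lambda_i-\lambda_j>0$; in both cases the summand $m_j\lambda_i\lambda_j/(\lambda_i-\lambda_j)$ is strictly negative. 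The total sum is then strictly negative and cannot vanish, a contradiction. Hence at most one anisotropic principal curvature is nonzero, which gives $g\leq 2$ and, in the borderline case $g=2$, forces exactly one of the two eigenvalues to be zero.

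The main obstacle is the derivation of the Cartan-type identity itself: the non-self-adjointness of $S_F$ with respect to the induced metric blocks a naive translation of Cartan's classical computation, and the crucial maneuver is the passage to the auxiliary metric $g_F$, under which the two Codazzi equations can be combined to reproduce the classical identity in its purest Euclidean form. Once the identity is in place, the sign argument above is essentially painless and handles all values of $g$ uniformly.
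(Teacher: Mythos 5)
Your sign analysis at the end is exactly the right one (and matches the paper's), but the heart of your proof --- the claim that the Cartan-type identity $\sum_{j\neq i} m_j\lambda_i\lambda_j/(\lambda_i-\lambda_j)=0$ follows from a purely local combination of the two Codazzi equations --- cannot be correct, because the identity, and indeed the lemma itself, is false for merely local anisotropic isoparametric hypersurfaces. The introduction of the paper describes Palmer's example: a piece $U$ of the helicoid in $\mathbb{R}^3$ with constant anisotropic principal curvatures $+1$ and $-1$ for a suitable $F$ (and $U\times\mathbb{R}\subset\mathbb{R}^4$ with curvatures $1,-1,0$). Plugging $\lambda_1=1$, $\lambda_2=-1$ into your identity with $i=1$ gives $-1/2\neq0$, and your sign argument would "prove" that one of $\pm1$ is zero. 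So any derivation of such an identity from local data alone (Codazzi equations, constancy of the $\lambda_i$, the auxiliary metric $g_F$) must break down; concretely, the "standard cascade of cross-term cancellations" fails because the Gauss--Codazzi equations govern $T$, not $S_F=A_F\circ T$, and the point-dependence of $A_F$ (it is evaluated at the varying Gauss image $\nu(p)$) injects derivative terms of $A_F$ that do not cancel. Your observation that $S_F$ is $g_F$-self-adjoint is correct but does not rescue the computation.

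The paper's proof is essentially global and uses completeness in a crucial way: for $t=1/\lambda_1$ ($\lambda_1$ the smallest positive curvature) it passes to the focal submanifold $x_t(M)$, computes the second fundamental form $II_u$ there in a normal direction $u$, and invokes Proposition \ref{prop} and Lemma \ref{lemma} --- which require completeness so that each leaf of $D_1$ is a closed $W^{m_1}_F$ and the normal map $\nu$ onto $\mathbb{S}^{m_1}$ is surjective --- to guarantee that \emph{both} $u$ and $-u$ are realized as normals of $M$ at the same focal point. The identity then comes from $\mathrm{tr}(II_{-u})=-\mathrm{tr}(II_u)$, and its weights are not the multiplicities $m_j$ but positive quantities $\Gamma^k_F$ built from diagonal entries of $\widetilde{A_F}^{-1}$ at the two preimage points. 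To repair your proof you would need to replace the local Codazzi computation by this focal-submanifold argument (or some other mechanism that genuinely uses completeness).
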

\begin{proof}
As $A_F$ is a positive self-dual operator, its square root $C_F$ is
uniquely determined and is also a positive self-dual operator.
Without loss of generality, we can choose a local orthonormal basis
$e_1,\cdots,e_n$ of $x: M\rightarrow \mathbb{R}^{n+1}$ such that
under this basis,
$$C_FTC_F=diag(\lambda_1I_{m_1},\cdots,\lambda_gI_{m_g})=:\Lambda,$$
where $T$ is the shape operator (matrix) of $M$.

Let
$(\varepsilon_1,\cdots,\varepsilon_n):=(C_F(e_1),\cdots,C_F(e_n))=(e_1,\cdots,e_n)C_F$.
Then it is a tangent frame (not necessarily orthornormal) of $M$,
and
\begin{equation}\label{eigenvector}
(S_F(\varepsilon_1),\cdots,S_F(\varepsilon_n))=(e_1,\cdots,e_n)A_FTC_F=(\varepsilon_1,\cdots,\varepsilon_n)\Lambda,
\end{equation}
which shows that $\varepsilon_1,\cdots,\varepsilon_n$ are the
eigenvectors corresponding to the anisotropic principal curvatures,
\emph{i.e.}, they span the distributions $D_1,\cdots,D_g$
sequentially.

Now without loss of generality, suppose $g\geq2$ and $\lambda_1>0$
be the smallest positive anisotropic principal curvature. Then we
will show that $g=2$ and $\lambda_2=0$. For $t=1/\lambda_1$,
consider the degenerate anisotropic parallel translation
$x_t:M\rightarrow\mathbb{R}^{n+1}$ of $x$. As we showed in the proof
of Proposition \ref{prop}, $x_t(M)$ is an $(n-m_1)$-dimensional
submanifold immersed in $\mathbb{R}^{n+1}$ whose tangent space is
spanned by $D_2,\cdots,D_g$ up to translations\footnote{In general,
$D_1$ is not the normal space of $x_t(M)$.}. Thus we would like to
study the geometry of $x_t(M)$ by its second fundamental form. It
follows from (\ref{d parallel}) and (\ref{eigenvector}) that
\begin{equation}\label{1rd f f}
(\langle dx_t(\varepsilon_a),
dx_t(\varepsilon_b)\rangle)=(I-t\widetilde{\Lambda})\widetilde{A_F}(I-t\widetilde{\Lambda}),
\end{equation}
where the indices $a,b,\cdots\in\{m_1+1,\cdots,n\}$,
$\widetilde{\Lambda}:=diag(\lambda_2I_{m_2},\cdots,\lambda_gI_{m_g})$,
$\widetilde{A_F}:=(\langle\varepsilon_a,\varepsilon_b\rangle)=(A_{Fab})$
is the submatrix of $A_F$ with respect to
$Span(e_{m_1+1},\cdots,e_n)$ and thus is positive. Let
$\widetilde{C_F}$ be the square root of $\widetilde{A_F}$. Then it
can be easily verified from (\ref{1rd f f}) that
$$(\tilde{e}_{m_1+1},\cdots,\tilde{e}_n):=(\varepsilon_{m_1+1},\cdots,\varepsilon_n)(I-t\widetilde{\Lambda})^{-1}\widetilde{C_F}^{-1}$$
is an orthonormal basis of $x_t(M)$ under the induced metric. From
now on, we would like to use upper indices to denote elements of the
inverse of a matrix. For example, we write
$\widetilde{C_F}^{-1}=(\widetilde{C_F}^{ab})$. Then by (\ref{d
parallel}), we have
\begin{equation}\label{dxtea}
dx_t(\tilde{e}_a)=\sum_{c=m_1+1}^ndx_t(\varepsilon_c)(1-t\lambda_c)^{-1}\widetilde{C_F}^{ca}=\sum_{c=m_1+1}^n\varepsilon_c\widetilde{C_F}^{ca}.
\end{equation}
Recall that $\nu$ is a unit normal vector field of $x_t(M)$ and we
have
\begin{equation}\label{dvea}
-d\nu(\tilde{e}_b)=A_F^{-1}S_F(\tilde{e}_b)=\sum_{c=m_1+1}^nA_F^{-1}(\varepsilon_c)\lambda_c(1-t\lambda_c)^{-1}\widetilde{C_F}^{cb}.
\end{equation}
Taking inner product of (\ref{dxtea}) and (\ref{dvea}), we obtain
the second fundamental form $II_{\nu}$ of $x_t(M)$ in direction
$\nu$ as follows:
$$II_{\nu}(\tilde{e}_a,\tilde{e}_b)=\langle dx_t(\tilde{e}_a),-d\nu(\tilde{e}_b)\rangle=\widetilde{C_F}^{ca}\lambda_c(1-t\lambda_c)^{-1}\widetilde{C_F}^{cb},$$
or in matrix form,
\begin{equation}\label{2nd f f}
II_{\nu}=\widetilde{C_F}^{-1}\frac{\widetilde{\Lambda}}{I-t\widetilde{\Lambda}}~\widetilde{C_F}^{-1}.
\end{equation}
For any point $q\in x_t(M)$ and any unit normal vector $u\in
\mathbb{S}^{m_1}\subset\mathcal {V}_q(x_t(M))$, by Proposition
\ref{prop} and Lemma \ref{lemma}, there exists a unique point $p$ in
the leaf $L_1=x_t^{-1}(q)=W^{m_1}_F$ of the distribution $D_1$ such
that $\nu(p)=u$. Hence, for $u,-u\in \mathbb{S}^{m_1}\subset\mathcal
{V}_q(x_t(M))$, there exist $p_1,p_2\in L_1$ such that $\nu(p_1)=u$
and $\nu(p_2)=-u$. Therefore, both $II_u$ and $II_{-u}$ can be
expressed in the form (\ref{2nd f f}), although the matrix
$\widetilde{C_F}^{-1}$ may differ from each other and thus we denote
them simply by $\widetilde{C_{F1}}^{-1},\widetilde{C_{F2}}^{-1}$. On
the other hand, we have $tr(II_{-u})=-tr(II_u).$ Thus we derive the
following Cartan-type identity:
\begin{equation*}
\sum_{a=m_1+1}^n(\widetilde{A_{F1}}^{aa}+\widetilde{A_{F2}}^{aa})\frac{\lambda_a}{1-t\lambda_a}=0,
\end{equation*}
or equivalently,
\begin{equation}\label{cartan identity}
\sum_{k=2}^g\frac{\Gamma_F^k\lambda_k}{1-t\lambda_k}=0,
\end{equation}
where $\widetilde{A_{Fl}}^{aa}=(\widetilde{C_{Fl}}^{-2})^{aa}$s are
the diagonal elements of $\widetilde{A_F}^{-1}$ corresponding to the
point $p_l$, $l=1,2$,
$\Gamma_F^k:=\sum\limits_{a=n_{k-1}+1}^{n_k}(\widetilde{A_{F1}}^{aa}+\widetilde{A_{F2}}^{aa})>0$,
and $n_k=\sum\limits_{j=1}^km_j$, $k=1,\cdots,g$.

Recall that $\lambda_1$ is the smallest positive anisotropic
principal curvature and $t=1/\lambda_1$. Consequently each term in
the summation of (\ref{cartan identity}) is non-positive and thus
must vanish, which implies that $g=2$ and $\lambda_2=0$. The proof
is now completed.
\end{proof}
An immediate consequence of (\ref{2nd f f}) and Lemma
\ref{glessthan3} is the following.
\begin{cor}\label{totally geodesic xtM}
When $g=2$, $x_t(M)$ $(t=1/\lambda_1)$ is totally geodesic and thus
it is congruent to $\mathbb{R}^{n-m_1}\subset\mathbb{R}^{n+1}$.
\end{cor}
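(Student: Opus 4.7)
The plan is to derive the corollary almost immediately from the second fundamental form formula (\ref{2nd f f}), using Lemma \ref{glessthan3} and Lemma \ref{lemma}.

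First, by Lemma \ref{glessthan3}, the assumption $g=2$ forces $\lambda_2 = 0$, so at $t = 1/\lambda_1$ the block-diagonal matrix $\widetilde{\Lambda}$ of Section \ref{proof} consists solely of $\lambda_2 I_{m_2} = 0$. Substituting $\widetilde{\Lambda} = 0$ into (\ref{2nd f f}) yields $II_\nu \equiv 0$ on $x_t(M)$ for every unit normal $\nu$ that arises as $\nu(p)$ for some $p$ in the fibre $L_1 = x_t^{-1}(q)$.

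Next, I verify that these $\nu$'s span the full normal bundle of $x_t(M)$. Since $x_t(M)$ is $(n-m_1)$-dimensional inside $\mathbb{R}^{n+1}$, its normal space $\mathcal{V}_q(x_t(M))$ at any point $q$ has dimension $m_1 + 1$. By Lemma \ref{lemma}, the Gauss map $\nu: L_1 \to \mathbb{S}^{m_1} \subset \mathcal{V}_q(x_t(M))$ is a diffeomorphism onto a full round $m_1$-sphere, and such a sphere cannot lie in any proper linear subspace of an $(m_1+1)$-dimensional ambient space. Hence these unit normals span the entire normal space at $q$, so $II \equiv 0$ in every normal direction and $x_t(M)$ is totally geodesic in $\mathbb{R}^{n+1}$.

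Finally, a connected, complete, totally geodesic submanifold of $\mathbb{R}^{n+1}$ is an entire affine subspace. Completeness of $x_t(M)$ is inherited from that of $M$ (its fibres $L_1 \cong W^{m_1}_F$ are compact images of $\mathbb{S}^{m_1}$ under $\phi$), so $x_t(M)$ is congruent to $\mathbb{R}^{n-m_1} \subset \mathbb{R}^{n+1}$, as claimed. The only mildly non-routine point is identifying the Gauss-image sphere $\mathbb{S}^{m_1}$ with the unit sphere of the full normal space $\mathcal{V}_q(x_t(M))$; this is forced by a dimension count together with the surjectivity provided by Lemma \ref{lemma}, after which the rest of the argument reduces to plugging $\widetilde{\Lambda} = 0$ into (\ref{2nd f f}).
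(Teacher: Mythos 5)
Your proposal is correct and follows exactly the route the paper intends: the paper states this corollary as an immediate consequence of (\ref{2nd f f}) and Lemma \ref{glessthan3}, and your argument simply fills in the details (setting $\widetilde{\Lambda}=0$ in (\ref{2nd f f}) and using Lemma \ref{lemma} to see that the normals $\nu(p)$, $p\in L_1$, span the whole normal space, so that $II$ vanishes identically). No gaps; the elaboration of why the Gauss-image sphere exhausts the unit normal sphere is exactly the point worth spelling out.
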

\begin{rem}
As the way taken in \cite{GT}, one can also calculate the power
expansion of $F$-Weingarten operator $S^t_F$ defined in
(\ref{F-shape for xt}) with respect to $t$ for $t$ sufficiently
close to $1/\lambda_1$, so as to deduce the Cartan-type identity
(\ref{cartan identity}) and hence Lemma \ref{glessthan3} and
Corollary \ref{totally geodesic xtM}.
\end{rem}
Finally, combining Lemma \ref{glessthan3}, Proposition \ref{totally
umbilical}, Corollary \ref{totally geodesic xtM} and formulas
(\ref{form WkF}), (\ref{subwulff-imm}), we can conclude the
classification in Theorem \ref{thm}. In view of results in Section
\ref{aniso-parallel-trans}, it seems that the local version of this
classification should also hold for analytic anisotropic defining
functions since in this case one may possibly derive the Cartan-type
identity (\ref{cartan identity}) by analytic extension.

\begin{ack}
The authors wish to thank Professor Haizhong Li for calling their attention to this problem
and Professor Reiko Miyaoka for her interest on this work.
They also would like to thank Professors Miyukai Koiso, Bennett Palmer and Zizhou Tang for
their supports and helpful conversations.

\end{ack}


\end{document}